\def\frk{\frak}               
\def\Phi{{\frk n}}
\def\Phi{{\frk N}}
\def\opn#1#2{\def#1{\operatorname{#2}}} 
\opn\chara{char} \opn\length{\ell} \opn\pd{pd} \opn\rk{rk}
\opn\projdim{proj\,dim} \opn\injdim{inj\,dim} \opn\rank{rank}
\opn\depth{depth} \opn\grade{grade} \opn\height{height}
\opn\embdim{emb\,dim} \opn\codim{codim}
\opn\Tr{Tr} \opn\bigrank{big\,rank}
\opn\superheight{superheight}\opn\lcm{lcm}
\opn\trdeg{tr\,deg}
\opn\reg{reg} \opn\lreg{lreg} \opn\ini{in} \opn\lpd{lpd}
\opn\size{size}
\opn\div{div} \opn\Div{Div} \opn\cl{cl} \opn\Cl{Cl}
\opn\Spec{Spec} \opn\Supp{Supp} \opn\supp{supp} \opn\Sing{Sing}
\opn\Ass{Ass} \opn\Min{Min} \opn\Shad{Shadow}
\opn\Ann{Ann} \opn\Rad{Rad} \opn\Soc{Soc}
\opn\Im{Im} \opn\Ker{Ker} \opn\Coker{Coker} \opn\Am{Am}
\opn\Hom{Hom} \opn\Tor{Tor} \opn\Ext{Ext} \opn\End{End}
\opn\Aut{Aut} \opn\id{id}
\opn\nat{nat}
\opn\pff{pf}
\opn\Pf{Pf} \opn\GL{GL} \opn\SL{SL} \opn\mod{mod} \opn\ord{ord}
\opn\Gin{Gin} \opn\Hilb{Hilb}
\opn\aff{aff} \opn\con{conv} \opn\relint{relint} \opn\st{st}
\opn\lk{lk} \opn\cn{cn} \opn\core{core} \opn\vol{vol}
\opn\link{link} \opn\star{star}
\opn\gr{gr}
\def\pot#1#2{#1[\kern-0.28ex[#2]\kern-0.28ex]}
\opn\dirlim{\underrightarrow{\lim}}
\opn\inivlim{\underleftarrow{\lim}}
\def\Implies{\ifmmode\Longrightarrow \else
        \unskip${}\Longrightarrow{}$\ignorespaces\fi}
\def\implies{\ifmmode\Rightarrow \else
        \unskip${}\Rightarrow{}$\ignorespaces\fi}
\def\iff{\ifmmode\Longleftrightarrow \else
        \unskip${}\Longleftrightarrow{}$\ignorespaces\fi}
\newtheorem{Theorem}{Theorem}[section]
\newtheorem{Lemma}[Theorem]{Lemma}
\newtheorem{Corollary}[Theorem]{Corollary}
\newtheorem{Proposition}[Theorem]{Proposition}
\newtheorem{Remark}[Theorem]{Remark}
\newtheorem{Example}[Theorem]{Example}
\newtheorem{Definition}[Theorem]{Definition}
\let\epsilon\varepsilon
\let\phi=\varphi
\let\kappa=\varkappa
\def\qed{\ifhmode\textqed\fi
      \ifmmode\ifinner\quad\qedsymbol\else\dispqed\fi\fi}
\def\textqed{\unskip\nobreak\penalty50
       \hskip2em\hbox{}\nobreak\hfil\qedsymbol
       \parfillskip=0pt \finalhyphendemerits=0}
\def\dispqed{\rlap{\qquad\qedsymbol}}
\opn\dis{dis}
\def\pnt{{\raise0.5mm\hbox{\large\bf.}}}
\opn\Lex{Lex}
\begin{document}

\title{Quasi $f$-Simplicial Complexes and Quasi $f$-Graphs  }

\author{ Hasan Mahmood$^{1}$, Fazal Ur Rehman$^{1}$ , Thai Thanh Nguyen$^{3}$, Muhammad Ahsan Binyamin$^{2}$
}
\thanks{\noindent $^{*}$ The first author and the last author are supported by the Higher Education Commission of Pakistan for this research (Grant no. 7515).\\
\noindent $^{1}$Government College University, Lahore, Pakistan. $^{2}$ Government College University, Faisalabad, Pakistan.
$^{3}$ Tulane University, USA; Hue University, College of Education, Vietnam.\\
{\em E-mails }: hasanmahmood@gcu.edu.pk,
fazalqau@gmail.com,  tnguyen11@tulane.edu, ahsanbanyamin@gmail.com}
\maketitle
\begin{abstract}
 The notion of $f$-ideal is recent and has so far been studied in several papers. In \cite{qfi}, the idea of $f$-ideal is generalized to quasi $f$-ideals, which is much larger class than the class of $f$-ideals.  In this paper, we introduce the concept of quasi $f$-simplicial complex and quasi $f$-graph.  We give a characterization of quasi $f$-graphs on $n$ vertices. A complete solution of connectedness of quasi $f$-simplicial complexes is described. We have also shown a method of constructing Cohen-Macaulay quasi $f$-graphs.
 \vskip 0.4 true cm
\noindent
  {\it Key words: }  $f$-ideal; $f$-graph; quasi $f$-ideal; facet ideal; Stanley-Reisner ideal\\
   {\it 2010 Mathematics Subject Classification}:\ \ \ 13F20, 05E45, 13F55, 13C14.\\
\end{abstract}

\section{Introduction }

Commutative algebra supplies basic methods  in the algebraic study
of combinatorics on convex polytopes and simplicial complex. Richard Stanley
was the first who used in a systematic way concepts and technique
for commutative algebra to study simplicial complex by considering
the Hilbert function of Stanley-Reisner rings, whose defining ideals
are generated by square-free monomials. A square-free monomial ideal
$I$ is an ideal of a polynomial ring $S=k[x_{1},x_{2},...,x_{n}]$ in
$n$ indeterminate over the field $K$ generated by the square-free
monomials. Corresponding to every  square-free monomial
ideal $I$ of $S$, there are two natural simplical complexes, namely, the facet complex of $I$, denoted by $\delta _{\mathcal{F}}(I)$,  and the non-face complex
$\delta_{\mathcal{N}}(I)$.
The equality of $f$-vectors of these two complexes gives us $f$-ideals; whereas the quasi $f$-ideals shows the interconnections and relevance of the $f$-vectors of these two naturally associated complexes to $I$. The notion of $f$-ideals was introduced in 2012 in \cite{deg2}. Later on, the idea of $f$-graphs was
introduced  in \cite{fgraph}. A simple finite graph $G$ on $n$
vertices is an $f$-graph if its edge ideal $I(G)$ is an $f$-ideal of
degree 2. These notions have been studied for it various properties
in the papers \cite{deg2} \cite{degd}, \cite{tswu1}, \cite{tswu2},
\cite{tswuPubl}, \cite{fgraph}, \cite{fsimp}, \cite{fnote}, and
\cite{Adam}. In \cite{qfi}, the authors extended this concept
to  the notion of quasi $f$-ideal which is, in fact, a
generalization of $f$-ideal. It turns out that every $f$-ideal is quasi $f$-ideal but not the converse. Moreover, the class of quasi $f$-ideals is  much bigger class than the class of $f$-ideals. Various
characterizations and construction, and the formula for computing
Hilbert function and Hilbert series of the polynomial ring modulo
quasi $f$-ideals of degree 2 can be found in
\cite{qfi}.
\\

\indent
 This paper is set up in five sections. In second section, we recall some basic concepts, and introduce the term of quasi $f$-simplicial complex and quasi $f$-graph. The section focuses on primary study of $f$-graphs; Theorem 3.4 provides a characterization of quasi $f$-graphs. The fourth section of this paper is devoted to the connectedness of quasi $f$-simplicial complexes. We show that all the quasi $f$-simplicial complexes with dimension greater and equal to 2 are connected, Theorem 4.1. Moreover, Theorem 4.2 classify all connected quasi $f$-graphs. In the last section, we have given a construction of Cohen-Macaulay quasi $f$-graphs.

\section{some fundamental concepts}
\indent Throughout this paper, the character $k$ represents a field, $R$ is a polynomial ring over $k$ in $n$ indeterminate $x_1,x_2,\dots,x_n$, and $G$ will denote a finite simple graph on vertex set $V$ with no isolated vertex.. Let us recall some basic concepts to get familiar with simplicial complexes and square-free monomial ideals. Let $V$ be a non-empty finite set and $\Delta$ be a finite collection of subsets of $V$. Then $\Delta$ is said to be a simplicial complex on $V$ if \\ $(i)$ $\{v\}\in\Delta$ for all $v\in V$ and, \\ $(ii)$ For every subset $E$ of $F\in \Delta$ implies $E\in \Delta$ \\
Here $V$ we call the vertex set of the simplicial complex $\Delta$. Each elements of $\Delta$ is known as face and the maximal faces under $\subseteq$ are known as facets. A subset $F\subset V$ is said to be non-face of $\Delta$ if $F\notin \Delta$ and we denote by ${\mathcal{N}}(\Delta)$, the set of minimal non-face of $\Delta$. The dimension of a face $F$ is defined as $|F|-1$, while the dimension of $\Delta$ is the maximum of the dimensions of all faces of $\Delta$. If $F_1,F_2,\ldots,F_r$ are the facets of $\Delta$, we write simplicial complex as $$\Delta=\langle F_{1}, F_{2},..., F_{r}\rangle$$ to say that $\Delta$ is generated by these $F_i's$. A simplicial complex $\Delta$ is said to be pure if all of its facets have the same dimension.
\begin{Remark}\emph{A finite simple graph is actually a $1$-dimensional simplicial complex, it is usually denoted by $G$. We shall denote by $E(G)$, is the set of all facets of $1$-dimensional simplicial complex have dimension $1$.}
\end{Remark}
 A vector $(f_{0},f_{1},...,f_{d})\in \mathbb{Z}^{d+1}$ is said to be an $f$-vector of a $d$-dimensional simplicial complex $%
\Delta $ if and only if $f_{i}$ is a number of $i$-dimensional faces of $%
\Delta $. It is usually denoted by $f(\Delta)$.

\noindent A simplicial complex $\Delta$ over $V$ is said to be connected if for any two
facets $F$ and $F^{'}$ of $\Delta$, there exists a sequence of
facets $F=F_{0},F_{1},...,F_{r}=F^{'}$ such that $F_{i}\cap
F_{i+1}\neq\phi$, where $0\leq i \leq r-1.$ A simplicial complex is
said to be disconnected if its not connected.\\
\indent In the following definitions we recall the relationship between the
algebraic and combinatorial structures due to R. P. Stanley (see
\cite{Stanley}) and S. Faridi \cite{faridi}.

\begin{Definition} $(${\bf{facet ideal and non-face ideal}}$)$
{\em The facet ideal $I_{\mathcal{F}}(\Delta)\subset R$ of simplicial complex $\Delta =\left\langle F_{1},F_{2},...,F_{r}\right\rangle $, is a square-free monomial ideal generated by the square-free monomials $m_{1},m_{2},...,m_{r}$ such that $m_{i}=\prod\limits_{v_{j}\in F_{i}}x_{j}$, where $i$ is coming from
$\{1,2,...,r\}$. A square-free monomial ideal of $R$ of a simplicial complex $\Delta$, denoted by $I_\Delta$ called non-face ideal (or Stanley-Reisner ideal) if it is generated by the square-free monomials $x_F=\prod\limits_{v_{j}\in F}x_{j}$ where $F\in {\mathcal{N}}(\Delta)$ .i.e. $I_\Delta=(x_{F}:F\in {\mathcal{N}}(\Delta))$}
\end{Definition}

\begin{Definition}$(${\bf{facet complex and non-face complex}}$)$
{\em Let $R=k[x_{1},x_{2},...,x_{n}]$ be a polynomial ring over the field $k$ and $I$ be a square-free monomial ideal of $R$. We use $G(I)$ to denote the unique set of minimal generators of $I$. The facet complex of $I$ is a simplicial complex $$\delta
_{\mathcal{F}}(I)=\{\{v_{i_{1}},v_{i_{2}},...,v_{i_{r}}\}\subseteq V \mid x_{i_{1}}x_{i_{2}}...x_{i_{r}}\in G(I)\}$$ and the non-face complex of $I$ is a simplicial complex $$\delta_{\mathcal{N}}(I)=\{\{v_{i_{1}},v_{i_{2}},...,v_{i_{r}}\}\subseteq V \mid x_{i_{1}}x_{i_{2}}...x_{i_{r}}\notin I\}$$}
\end{Definition}

Now we recall the definition of $f$-ideal.
\begin{Definition}
{\em A square-free monomial ideal $I$ of the polynomial ring $R=k[x_1,x_2\\,\ldots,x_n]$ is said to be an $f$-ideal if $f(\delta_{\mathcal{F}}(I))=f(\delta_{\mathcal{N}}(I))$. A simplicial complex $\Delta$ on $n$ vertices is said to be an $f$-simplicial complex if the facet ideal of $\Delta$ is an $f$-ideal of $R$. A $1$-dimensional $f$-simplicial complex is termed as $f$-graphs.  }
\end{Definition}
We refer the readers to \cite{deg2}, \cite{degd}, \cite{tswu1}, \cite{tswu2},
\cite{tswuPubl}, \cite{fgraph}, \cite{fsimp}, \cite{fnote}, and
\cite{Adam} to know more about $f$-ideals, $f$-graphs and $f$-simplicial complexes. The notion of $f$-ideals was generalized to
 quasi $f$-ideals in \cite{qfi}. The study of quasi $f$-ideals is the study of interconnection between the $f$-vectors of the facet complex and the non-face complex of the ideal. The idea is to read off one vector through the other (see \cite{qfi} for more details).  It is
defined as follows.
 \begin{Definition}
{\em Let $(a_1,a_2,\ldots,a_s)\in \mathbb{Z}^s $. A square-free monomial ideal $I$ in the polynomial ring
$R=k[x_{1},x_{2,}...,x_{n}]$ over the field $k$ is said to be  quasi $f$-ideal
of type $(a_1,a_2,\ldots,a_s)$ if and only if  $f(\delta _{\mathcal{N}}(I))- f(\delta _{\mathcal{F}}(I))=\left(
a_1,a_2,\ldots,a_s\right) $.}
\end{Definition}

\begin{Example}{\label {Exp1}}
\emph{Let
$I=(x_{1}x_{2}x_{4},x_{1}x_{2}x_{5},x_{3}x_{4}x_{5},x_{1}x_{4}x_{5})$
be a pure square-free monomial ideal of degree $3$ in the polynomial
ring $R[x_{1},x_{2},x_{3},x_{4},x_{5}].$ Then the primary
decomposition of $I$ is $I=(x_{1},x_{3})\bigcap (x_{1},x_{4})\bigcap
(x_{1},x_{5})\bigcap (x_{2},x_{4})\bigcap (x_{2},x_{5})\bigcap
(x_{4},x_{5}).$ The facet and the non-face complexes of $I$ are
 $$\delta_{\mathcal{F}}(I)=\langle
\{1,2,4\},\{1,2,5\},\{3,4,5\},\{1,4,5\}\rangle$$
 and
$$\delta _{\mathcal{N}}(I)=\langle
\{1,2,3\},\{1,3,4\},\{1,3,5\},\{2,3,4\},\{2,3,5\},\{2,4,5\}\rangle.$$
Then $f(\delta _{\mathcal{F}}(I))=(5,8,4)$ and $f(\delta
_{\mathcal{N}}(I))=(5,10,6).$ Thus $I$ is a quasi $f$-ideal with
type $(0,2,2).$}
\end{Example}

Now we want to include a natural notion relative to quasi $f$-ideals. There are quasi $f$-simplicial complexes and quasi $f$-graphs. They are given below:

\begin{Definition}
{\em Let $(a_1,a_2,\ldots,a_s)\in \mathbb{Z}^s $; let  $\Delta$ be a
simplicial complex on the vertex set $V=\{v_1,v_2,\ldots,v_n\}$. We
say that $\Delta$ is quasi $f$-simplicial complex of type
$(a_1,a_2,\ldots,a_s)$ if the facet ideal of $\Delta$ is
quasi $f$-ideal of type $(a_1,a_2,\ldots,a_s)$ in the ring $R=k[x_1,x_2,\ldots,x_n]$. It is natural to
call $1$-dimensional quasi $f$-simplicial complex as quasi $f$-graph. Indeed, the type of quasi f-graph would be some ordered pair of integers. }
\end{Definition}

In the following Figure 1 we give the complete list of all non-isomorphic quasi $f$-graphs on $n\leq 6$ vertices with type indicated.
\begin{figure}[h!]
  \includegraphics[width=15 cm]{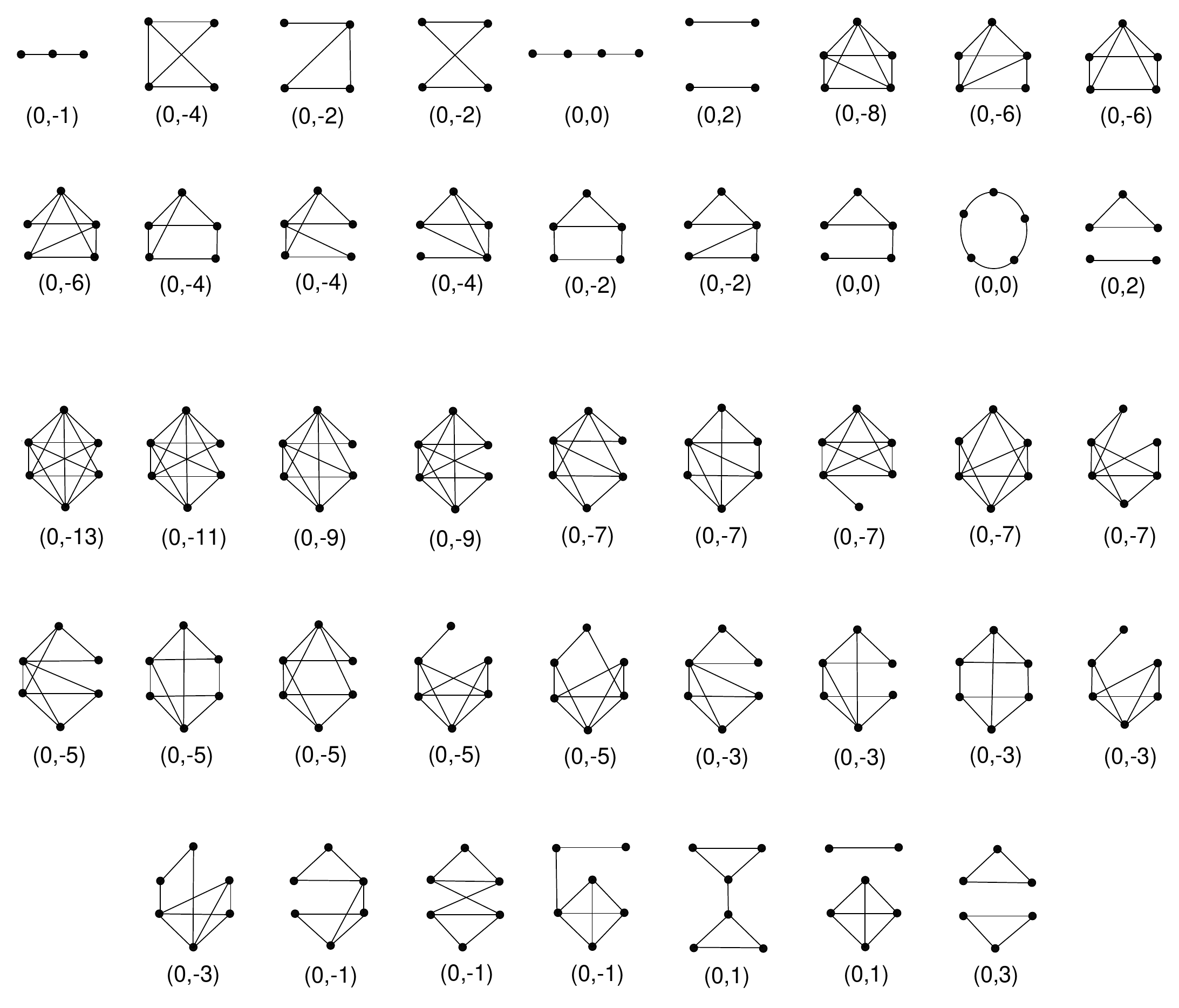}\\
  \caption{ Quasi $f$-graphs up to $n\leq 6$ }\label{y}
\end{figure}

\begin{Remark}
 \emph{It is important to mention that all quasi $f$-graphs will be of the type $(0,b)$, for instance, the type of quasi $f$-graph $G$ on vertex set $V$ will be ordered pair $(a,b)\in\mathbb{Z}^2$. However, since $G$ is a simple graph with no isolated vertex it means that the edge ideal $I(G)$ of $R=k[x_1,x_2,\ldots,x_n]$ is pure square-free monomial quasi $f$-ideal of degree 2 with type $(a,b)\in\mathbb{Z}^2$ and also $supp(G(I(G)))=\{x_1,x_2,\ldots,x_n\}$. Therefore, both the facet complex and the non-face complex of $I(G)$  will have the same vertex set, this means that $a$ must be zero in the ordered pair $(a,b)$. Thus any quasi $f$-graph must be of the type $(0,b)$.}
\end{Remark}

\begin{Example}
{\em Every $f$-simplicial complex ($f$-graph) is a quasi $f$-simplicial (quasi $f$-graph) with type
{\bf{\underline{0}}}-vector. }
\end{Example}

\begin{Example}
{\em In Example \ref{Exp1}, the facet complex of $I$ is a quasi $f$-simplicial complex of type $(0,2,2)$.}
\end{Example}

\begin{Example}
{\em The simplicial complex $\Delta=\langle \{v_1,v_2\},\{v_3,v_4\},\{v_3,v_5\},\{v_1,v_4,v_5\}\rangle$ on $V=\{v_1,v_2,v_3,v_4,v_5\}$ is a non-pure quasi $f$-simplicial complex of type $(0,1,0)$.}
\end{Example}

\begin{Example}
{\em A graph $G=\langle \{v_1,v_2\},\{v_2,v_3\},\{v_3,v_4\},\{v_3,v_5\},\{v_1,v_5\}\rangle$ on $V=\{v_1,v_2,v_3,v_4,v_5\}$ is not a quasi $f$-graph.}
\end{Example}

 \section{Quasi $f$-Graphs and its characterization}
 The purpose of the present section is to give a complete characterization of quasi $f$-graphs. First of all, we would like to recall \cite[Definition 2.1]{tswuPubl} of perfect sets of $R$. Let $sm(R)$ denote the set of all square-free monomials in $R$; let $sm(R)_d$ be the set of all square-free monomials of degree $d$ in $sm(R)$.  For a subset $U\subseteq sm(R)$, we set
$$\sqcup(U)=\{gx_{i} \ | \ g\in U, {x_{i}} \text{  does not divide  } g, 1\leq
i\leq n\}\subset sm(R)_{d+1}$$ and $$\sqcap(U)=\{h \ | \ h=g/x_{i} \text{  for some  } g\in U  \text{  and some  } x_{i} \text{
with  } x_{i}|g\}\subset sm(R)_{d-1}$$
The set $U$ is then called upper perfect if $\sqcup(U)= sm(R)_{d+1}$, and it is said to be lower perfect if $\sqcap(U)= sm(R)_{d-1}$. The set $U$ is called a perfect set if and only if it is both lower and upper perfect. In general, perfect sets can have different cardinalities; for example, every subset of $sm(R)_{d}$ containing a perfect set is again a perfect set. The smallest number among the cardinalities of perfect sets of degree ${d}$ is called the $(n,d)^{th}$ perfect number, and is denoted by
 $N{(n,d)}$. By \cite[Lemma 3.3]{tswuPubl}, for a positive $t$ and $n\geq4$, we have the following equations:

  $$ N(n,2)= \left\{
                                                                                    \begin{array}{ll}
                                                                                      t^2-t, & \hbox{ when $n=2t$;} \\
                                                                                      t^2, & \hbox{ when $n = 2t+1$.}
                                                                                    \end{array}
                                                                                  \right.
$$

The following lemma plays an important role in the characterization of quasi $f$-graphs.
\begin{Lemma}
\emph{Let $G$ be a simple graph on the set of vertices
$\{v_{1},v_{2},...,v_{n}\}$. Then the complementary graph
$\overline{G}$ of $G$ is triangle-free if $G$ is quasi $f$-graph of
type $(0,b).$}
\end{Lemma}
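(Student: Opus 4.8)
The plan is to identify the triangles of $\overline{G}$ with the $2$-dimensional faces of the non-face complex $\delta_{\mathcal N}(I(G))$, and then to observe that the hypothesis ``type $(0,b)$'' leaves no room for such a face.

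First I would write out the two complexes attached to $I(G)$. Since $G$ is a simple graph with no isolated vertex, its edge ideal $I(G)$ is generated in degree $2$ and $\delta_{\mathcal F}(I(G))$ is $G$ itself; in particular it is $1$-dimensional, so $f(\delta_{\mathcal F}(I(G)))=(n,\,|E(G)|)$ has length $2$, with vanishing third coordinate. On the other hand, a triple $\{v_i,v_j,v_k\}$ lies in $\delta_{\mathcal N}(I(G))$ exactly when $x_ix_jx_k\notin I(G)$, i.e.\ when none of $v_iv_j,\,v_iv_k,\,v_jv_k$ is an edge of $G$, which is precisely the condition that $\{v_i,v_j,v_k\}$ span a triangle of $\overline{G}$. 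Hence the number of $2$-faces of $\delta_{\mathcal N}(I(G))$ equals the number of triangles of $\overline{G}$.

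Now suppose, toward a contradiction, that $\overline{G}$ contains a triangle. Then $\delta_{\mathcal N}(I(G))$ has a $2$-face, so $\dim\delta_{\mathcal N}(I(G))\ge 2$ and $f(\delta_{\mathcal N}(I(G)))$ is a vector of length at least $3$ with strictly positive third entry. Subtracting $f(\delta_{\mathcal F}(I(G)))$, whose third entry is $0$, therefore yields a vector with nonzero third coordinate, which cannot equal the ordered pair $(0,b)\in\mathbb Z^2$; this contradicts the assumption that $G$ is a quasi $f$-graph of type $(0,b)$. Consequently $\overline{G}$ has no triangle.

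The argument is essentially bookkeeping with $f$-vectors, so there is no deep obstacle; the one point that needs care is the convention for comparing $f$-vectors of complexes of different dimensions, namely making explicit that ``quasi $f$-graph of type $(0,b)$'' forces $\dim\delta_{\mathcal N}(I(G))\le 1$ rather than merely constraining the first two coordinates of the difference. Equivalently, one may run the whole proof contrapositively: $\overline{G}$ is triangle-free $\iff$ $G$ has no independent set of size $3$ $\iff$ $\delta_{\mathcal N}(I(G))$ has no $2$-dimensional face, and the last property holds for any quasi $f$-graph of type $(0,b)$ directly from the definition of the type.
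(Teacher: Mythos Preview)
Your proof is correct, and it reaches the same conclusion as the paper but by a somewhat different route. The paper invokes an external characterization (\cite[Theorem 4.3]{qfi}) to assert that the minimal generating set $G(I(G))$ of a quasi $f$-ideal is \emph{upper perfect}, i.e.\ $sm(R)_3\subseteq I(G)$, and then notes that a triangle in $\overline{G}$ would produce a degree-$3$ square-free monomial missing from $\sqcup(G(I(G)))$. You instead argue directly from the definition: the type $(0,b)\in\mathbb{Z}^2$ forces the $f$-vector of $\delta_{\mathcal N}(I(G))$ to have length~$2$, hence $\dim\delta_{\mathcal N}(I(G))\le 1$, hence no $2$-face, hence no triangle in $\overline{G}$. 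Your approach is more self-contained, avoiding the reference to the companion paper, while the paper's approach highlights the link to the perfect-set machinery used elsewhere in the theory. Your explicit acknowledgment of the convention issue (that ``type $(0,b)$'' must constrain the dimension of $\delta_{\mathcal N}$, not merely the first two coordinates) is exactly the right caveat; the paper's definition of type as the literal vector difference supports your reading.
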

\begin{proof}
If $G$ is a quasi $f$-graph, [Definition 3.1] implies $I(G)$ is a
quasi $f$-ideal. By using \cite[Theorem 4.3]{qfi} the minimal
generating set $G(I(G))$  is upper perfect, this means that
$sm(S)_{3}\subseteq I(G)$. Suppose that $\overline{G}$ contains a triangle of edges $\{{v_{i_1}},{v_{i_2}}\}$, $\{{v_{i_2}},{v_{i_3}}\}$ and $\{{v_{i_3}},{v_{i_1}}\}$. Then it means that there exists a monomial ${x_{i_1}}{x_{i_2}}{x_{i_3}}\notin \sqcup(G(I(G)))$ which is a contradiction.
\end{proof}
It will be interesting to determine the bounds on the values of b, which is given
in the [Proposition 4.4] below.
Here we recall a result from \cite{qfi}.

\begin{Proposition} {\em Let $I$ be a quasi $f$-ideal of degree $2$ and type $(0,b)$ in the polynomial ring $R=k[x_{1},x_{2},...,x_{n}]$. Then the following holds true: $$ -{n\choose 2}+2\leq b\leq {n\choose 2}-2N(n,2) $$}
\end{Proposition}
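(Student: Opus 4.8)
The plan is to rewrite both inequalities as bounds on the number $m$ of minimal generators of $I$, and then to extract them from the definition of the perfect number $N(n,2)$ and from the combinatorics of the non-face complex. Since $I$ is square-free of degree $2$, its minimal generators are exactly the monomials $x_ix_j$ with $\{v_i,v_j\}$ an edge of the graph $G$ determined by $I=I(G)$, so $m=|G(I)|=|E(G)|$. First I would pin down the shapes of the two complexes: $\delta_{\mathcal{F}}(I)$ is $1$-dimensional, and by the preceding Lemma $\overline{G}$ is triangle-free, so $G$ has no independent triple and the non-face (independence) complex $\delta_{\mathcal{N}}(I)$ has dimension at most $1$; since the type is required to be a vector in $\mathbb{Z}^2$ the two $f$-vectors must have the same length, so in fact $\dim\delta_{\mathcal{N}}(I)=1$. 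Hence $f(\delta_{\mathcal{F}}(I))=(|\supp(G(I))|,\,m)$ and $f(\delta_{\mathcal{N}}(I))=(n,\,\binom{n}{2}-m)$, the last entry because $\{v_i,v_j\}$ is a $1$-face of $\delta_{\mathcal{N}}(I)$ iff $x_ix_j\notin I$. Comparing $0$-th coordinates, the ``$0$'' in the type $(0,b)$ says $|\supp(G(I))|=n$, i.e. $G$ has no isolated vertex; comparing $1$-st coordinates gives $b=\binom{n}{2}-2m$. Thus the Proposition is equivalent to $N(n,2)\le m\le\binom{n}{2}-1$.

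The lower bound $b\ge-\binom{n}{2}+2$ is then essentially immediate: $\dim\delta_{\mathcal{N}}(I)=1$ forces $\delta_{\mathcal{N}}(I)$ to have at least one $1$-face, i.e. $\binom{n}{2}-m\ge 1$, i.e. $m\le\binom{n}{2}-1$; substituting into $b=\binom{n}{2}-2m$ gives $b\ge-\binom{n}{2}+2$. (Equivalently, $G$ cannot be the complete graph $K_n$.)

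For the upper bound $b\le\binom{n}{2}-2N(n,2)$, i.e. $m\ge N(n,2)$, I would argue that $G(I)$ is a perfect set of degree $2$. Being a quasi $f$-ideal, \cite[Theorem 4.3]{qfi} makes $G(I)$ upper perfect, that is $\sqcup(G(I))=sm(R)_3$; and the absence of isolated vertices, already obtained from the $0$-th coordinate of the type, says precisely that every variable appears in some generator, i.e. $\sqcap(G(I))=sm(R)_1$, so $G(I)$ is also lower perfect. Therefore $G(I)$ is a perfect set of degree $2$ with exactly $m$ elements, and since $N(n,2)$ is by definition the least cardinality of a perfect set of degree $2$, we get $m\ge N(n,2)$, hence $b\le\binom{n}{2}-2N(n,2)$.

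The individual steps are all short, and I expect the only thing to watch carefully is the bookkeeping about vertex sets and $f$-vector lengths: one must read $\delta_{\mathcal{F}}(I)$ on the support $\supp(G(I))$ (so that the $0$-th coordinate of the type can be nonzero in general and the equality $|\supp(G(I))|=n$ carries real content), and one must use that the very requirement that the type be a length-$2$ vector --- combined with the triangle-freeness of $\overline{G}$ from the preceding Lemma --- is what forces $\delta_{\mathcal{N}}(I)$ to be exactly $1$-dimensional. Beyond that, no new combinatorics is needed: the numerics of $N(n,2)$ are a Mantel-type extremal count already recorded above.
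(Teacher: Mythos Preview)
Your argument is correct. The paper, however, does not prove this Proposition at all: it is introduced with ``Here we recall a result from \cite{qfi}'' and simply quoted, so there is no in-paper proof to compare against. What you have written is exactly the natural proof one would expect in \cite{qfi}: translate the type condition into $b=\binom{n}{2}-2m$ with $m=|G(I)|$, get the lower bound from $\dim\delta_{\mathcal{N}}(I)=1$ (so at least one non-edge exists), and get the upper bound by recognising $G(I)$ as a perfect set of degree~$2$ and invoking the definition of $N(n,2)$.

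One small logical point worth tidying: you appeal to ``the preceding Lemma'' for the triangle-freeness of $\overline{G}$, but that Lemma is stated for quasi $f$-\emph{graphs}, and you only establish that $G$ has no isolated vertices (hence is a quasi $f$-graph in the paper's convention) \emph{after} computing the $f$-vectors. There is no real circularity, since the only input the Lemma's proof uses is that $G(I)$ is upper perfect, which comes straight from \cite[Theorem~4.3]{qfi} and applies to any quasi $f$-ideal; but it would be cleaner to cite \cite[Theorem~4.3]{qfi} directly for upper perfectness at that step, and then use it again (together with the lower perfectness you derive from $\supp(G(I))=\{x_1,\dots,x_n\}$) for the upper bound.
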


\begin{Corollary}
\emph{Let $G$ be a quasi $f$-graph of type $(0,b)$ on a vertex set $V=\{v_{1},v_{2},...,v_{n}\}$. Then the bounds of $b$ are follows: $$ -{n\choose 2}+2\leq b\leq {n\choose 2}-2N(n,2) $$}

\end{Corollary}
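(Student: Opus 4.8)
The plan is to read the Corollary off directly from the Proposition recalled above, so that the whole argument is a short translation between the language of graphs and that of ideals. First I would unwind the hypothesis: by our standing conventions $G$ is a finite simple graph on $V=\{v_1,\dots,v_n\}$ with no isolated vertex, so its edge ideal $I(G)\subset R=k[x_1,\dots,x_n]$ is a square-free monomial ideal whose minimal generating set $G(I(G))$ consists of the degree-$2$ monomials $x_ix_j$ with $\{v_i,v_j\}\in E(G)$, and $\supp(G(I(G)))=\{x_1,\dots,x_n\}$. By the definition of a quasi $f$-graph, the statement that $G$ is a quasi $f$-graph of type $(0,b)$ is exactly the statement that the facet ideal $I(G)$ is a quasi $f$-ideal of $R$ of type $(0,b)$; and that the first coordinate of this ordered pair is forced to be $0$ is the content of the Remark above, since $\delta_{\mathcal F}(I(G))$ and $\delta_{\mathcal N}(I(G))$ both have the full vertex set $V$ once $G$ has no isolated vertex. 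Hence $I(G)$ is a pure quasi $f$-ideal of degree $2$ and type $(0,b)$.

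I would then simply apply the Proposition above with $I=I(G)$, which yields
$$ -{n\choose 2}+2\;\leq\; b\;\leq\; {n\choose 2}-2N(n,2), $$
exactly the asserted estimate, and the proof is complete.

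I do not anticipate any genuine difficulty here, since the substance of the statement is carried entirely by the Proposition, which is quoted from \cite{qfi}; the passage from graphs to ideals is immediate from the definitions. If instead a self-contained derivation is wanted, one can argue directly: for a graph $G$ with $m$ edges one computes $f(\delta_{\mathcal F}(I(G)))=(n,m)$ and $f(\delta_{\mathcal N}(I(G)))=\bigl(n,{n\choose 2}-m\bigr)$, so that $b={n\choose 2}-2m$; the upper bound on $b$ is then equivalent to $m\geq N(n,2)$, which holds because $G(I(G))$ is a perfect set of degree $2$ — it is lower perfect as $G$ has no isolated vertex, and upper perfect by the argument used in the Lemma above, since $\overline G$ is triangle-free — and $N(n,2)$ is by definition the least cardinality of such a set; while the lower bound on $b$ is equivalent to $m\leq{n\choose 2}-1$, i.e.\ to $\delta_{\mathcal N}(I(G))$ being genuinely $1$-dimensional, which is precisely what is needed for the type to be an ordered pair rather than a single integer. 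The only mildly delicate point in this alternative route is this last observation about the length of the $f$-vector, and it is already absorbed into the statement of the Proposition.
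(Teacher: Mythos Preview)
Your proof is correct and follows exactly the paper's approach: translate the hypothesis that $G$ is a quasi $f$-graph of type $(0,b)$ into the statement that $I(G)$ is a quasi $f$-ideal of degree $2$ and type $(0,b)$, then invoke the Proposition from \cite{qfi}. The paper's proof is just these two lines; your additional self-contained derivation of the bounds via $b=\binom{n}{2}-2m$ and the perfectness of $G(I(G))$ is correct as well, but goes beyond what the paper does here.
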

\begin{proof}
If $G$ is a quasi $f$-graph of type $(0,b)$ on a vertex set $V=\{v_{1},v_{2},...,v_{n}\}$, then it means that $I(G)$ is a quasi $f$-ideal in the polynomial ring $R=k[x_{1},x_{2},...,x_{n}]$ and type $(0,b)$. Using Theorem 3.2 we have the desired inequality.
\end{proof}

Now we give a characterization of quasi $f$-Graphs below.
\begin{Theorem}
\emph{Let $V=\{v_{1},v_{2},...,v_{n}\}$, let $G$ be a simple graph on the
vertex $V$ with no isolated vertices and $|E(G)|={\frac {1}{2}}({n\choose 2}-b)$, where $|b|< {n\choose 2}$.
Then $G$ will be a quasi $f$-graph of type $(0,b)$ if and only if the
complementary graph $\overline{G}$ of $G$ is triangle-free.}
\end{Theorem}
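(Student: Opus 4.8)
The plan is to characterize quasi $f$-graphs via the two conditions that define a quasi $f$-ideal of degree $2$: the minimal generating set $G(I(G))$ must be upper perfect (equivalently $sm(R)_3 \subseteq I(G)$), and the $f$-vector difference must have zero first coordinate and second coordinate exactly $b$. By \cite[Theorem 4.3]{qfi}, an ideal $I$ of degree $2$ and type $(0,b)$ is characterized by $G(I)$ being upper perfect together with a counting condition on the number of edges; since the hypothesis fixes $|E(G)| = \tfrac12\bigl({n\choose 2} - b\bigr)$, the counting condition is automatically built into the statement. So the whole theorem reduces to showing that, under this edge-count hypothesis, $G(I(G))$ is upper perfect if and only if $\overline{G}$ is triangle-free.

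First I would prove the forward direction, which is essentially Lemma 3.1 already in the excerpt: if $G$ is a quasi $f$-graph of type $(0,b)$ then $G(I(G))$ is upper perfect, hence $sm(R)_3 \subseteq I(G)$, and a triangle $\{v_{i_1}v_{i_2}, v_{i_2}v_{i_3}, v_{i_3}v_{i_1}\}$ in $\overline{G}$ would mean none of $x_{i_1}x_{i_2}, x_{i_2}x_{i_3}, x_{i_3}x_{i_1}$ lies in $G(I(G))$, so $x_{i_1}x_{i_2}x_{i_3} \notin \sqcup(G(I(G)))$, contradicting upper perfectness. For the converse, suppose $\overline{G}$ is triangle-free. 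I need two things: (a) that $G(I(G))$ is upper perfect, and (b) that the resulting type is $(0,b)$ with the given $b$. For (a): take any degree-$3$ squarefree monomial $x_ix_jx_k$; since $\overline{G}$ is triangle-free, at least one of the pairs $\{i,j\}, \{j,k\}, \{i,k\}$ is an edge of $G$, say $\{i,j\}$, so $x_ix_j \in G(I(G))$ and $x_k$ does not divide it, whence $x_ix_jx_k \in \sqcup(G(I(G)))$; thus $\sqcup(G(I(G))) = sm(R)_3$. For (b): because $G$ has no isolated vertices and every variable appears, the facet and non-face complexes have the same vertex set, so $f_0$ agrees and the first coordinate of the type is $0$ (this is exactly Remark 3.?); the second coordinate is $|\text{non-edges}| - |\text{edges}| = \bigl({n\choose 2} - |E(G)|\bigr) - |E(G)| = {n\choose 2} - 2|E(G)| = b$ by the hypothesis on $|E(G)|$. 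Then by \cite[Theorem 4.3]{qfi}, upper perfectness of $G(I(G))$ plus the correct edge count forces $I(G)$ to be a quasi $f$-ideal of type $(0,b)$, i.e. $G$ is a quasi $f$-graph of that type.

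The main obstacle is verifying precisely which hypotheses \cite[Theorem 4.3]{qfi} requires: one must check that upper perfectness of the degree-$2$ generating set together with the edge-count $|E(G)| = \tfrac12({n\choose 2}-b)$ is genuinely equivalent to being a quasi $f$-ideal of type $(0,b)$, rather than merely necessary. This amounts to confirming that for a degree-$2$ squarefree monomial ideal whose generating set is upper perfect, the non-face complex $\delta_{\mathcal N}(I(G))$ is automatically the full simplex skeleton in dimensions $0$ and $1$ (so that its $f$-vector is $(n, {n\choose 2}, \dots)$ truncated appropriately), and that the higher $f_i$'s cannot interfere with the type having length exactly $2$. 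The condition $|b| < {n\choose 2}$ should be what guarantees the edge count is a genuine nonnegative integer strictly between $0$ and ${n\choose 2}$, so that such a graph can exist; I would note this and invoke Proposition 3.2 / Corollary 3.3 only as a sanity check on the range of $b$, not as part of the core argument. Apart from that bookkeeping, the proof is the short two-direction triangle-free argument sketched above.
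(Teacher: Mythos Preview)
Your proposal is correct and follows essentially the same route as the paper. The forward direction is exactly Lemma~3.1; for the converse, both you and the paper reduce to showing that triangle-freeness of $\overline{G}$ gives $sm(R)_3 \subseteq I(G)$ (equivalently, upper perfectness of $G(I(G))$), and then compute the $f$-vector difference directly from the edge-count hypothesis to obtain type $(0,b)$.

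The one point worth noting is your ``main obstacle'': you worry whether higher $f_i$'s could make the type longer than $2$. The paper resolves this in one line: since $sm(R)_3 \subseteq I(G)$, the non-face complex $\delta_{\mathcal{N}}(I(G))$ has no $2$-dimensional faces, hence $\dim \delta_{\mathcal{N}}(I(G)) \leq 1$, and the hypothesis $|b|<\binom{n}{2}$ (which forces $|E(G)| < \binom{n}{2}$, so $\overline{G}$ has at least one edge) gives $\dim \delta_{\mathcal{N}}(I(G)) = 1$. Once you observe this, your part~(b) already computes the type directly, and the appeal to \cite[Theorem~4.3]{qfi} becomes redundant rather than load-bearing.
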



\begin{proof}
Suppose $G$ is a quasi $f$-graph with type $(0,b)$, then [Lemma 3.3]
follows the desired result. For the converse of this theorem,
suppose $\overline{G}$ is a triangle free graph. Therefore,
$sm(S)_{3}\subseteq I(G)$ and this implies $dim (\delta
_{\mathcal{N}}(I(G)))\leq 1$. By using the fact that $|b|<{n\choose
2}$ yields that $dim(\delta _{\mathcal{N}}(I(G)))=1=dim(\delta
_{\mathcal{F}}(I(G))).$ Since $G$ is a simple graph with no isolated
vertices, $supp(I(G))=\{x_{1},x_{2},...,x_{n}\}$ and in view of
\cite[Remark 2.7]{deg2} the both facet complex $(\delta
_{\mathcal{F}}(I(G)))$ and the non face complex $(\delta
_{\mathcal{N}}(I(G)))$ are on same number of vertices, which implies
$f_{0}(\delta _{\mathcal{N}}(I(G)))-f_{0}(\delta
_{\mathcal{F}}(I(G)))=0.$ Note that $(\delta
_{\mathcal{N}}(I(G)))=\overline{G}.$ By \cite[Lemma 3.2]{deg2} we
have $f_{1}(\delta _{\mathcal{N}}(I(G)))={n\choose 2}-f_{1}(\delta
_{\mathcal{F}}(I(G)))$ and as given in above $f_{1}(\delta
_{\mathcal{F}}(I(G)))=|E(G)|={\frac {1}{2}}({n\choose 2}-b)$
together implies $f_{1}(\delta _{\mathcal{N}}(I(G)))-f_{1}(\delta
_{\mathcal{F}}(I(G)))=b.$  The parity of ${n\choose 2}$ is same as
the parity of $b$ implies that ${n\choose 2}\equiv 0\ mod\ 2$ $(1\
mod\ 2)$ if $b$ is even (odd). Hence $I(G)$ is a quasi $f$-ideal and
using [Definition 3.1], $G$ is a quasi $f$-graph of type $(0,b)$
\end{proof}

\section{Connectedness of quasi $f$-simplicial complexes}

 We now concentrate on the problem of the connectedness of quasi $f$-simplicial complexes. In this section, we will classify connected and disconnected quasi $f$-simplicial complexes in terms of their dimensions.

\begin{Theorem}
\emph{Let $V=\{v_{1},v_{2},...,v_{n}\}$ be a vertex set and let $\Delta$
be a pure simplicial complex on $V$ of dimension $d$ with $d>1$. If
$\Delta$ is a quasi $f$-simplicial complex, then $\Delta$ will be
connected.}
\end{Theorem}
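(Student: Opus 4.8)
The plan is to argue by contradiction, comparing the dimensions of the facet complex and the non-face complex of the facet ideal of $\Delta$. Write $I=I_{\mathcal F}(\Delta)$. Since the facets of $\Delta$ are pairwise incomparable, the monomials $m_i=\prod_{v_j\in F_i}x_j$ are pairwise non-dividing, so they form the minimal generating set $G(I)$ and $\delta_{\mathcal F}(I)=\Delta$; in particular $\dim\delta_{\mathcal F}(I)=d$, and purity of $\Delta$ forces $I$ to be generated in the single degree $d+1$. It follows that a squarefree monomial $x_F$ lies in $I$ exactly when $F$ contains some facet of $\Delta$. The first thing to record is the constraint imposed by the hypothesis: for $\Delta$ to be a quasi $f$-simplicial complex the vector $f(\delta_{\mathcal N}(I))-f(\delta_{\mathcal F}(I))$ must be defined as an integer vector, which forces $\dim\delta_{\mathcal N}(I)=\dim\delta_{\mathcal F}(I)=d$.

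Next I would unpack what disconnectedness gives combinatorially. If $\Delta$ is disconnected, then under the relation ``joined by a chain of facets with successive nonempty intersections'' the facets split into at least two classes, and two facets lying in different classes are disjoint. Since every vertex of $V$ is contained in some facet, the supports of the classes partition $V$; grouping the classes into two blocks yields a partition $V=V_1\sqcup V_2$ with $V_1,V_2\neq\emptyset$ such that every facet of $\Delta$ is contained entirely in $V_1$ or entirely in $V_2$. Here purity is used: each block contains at least one facet, and each such facet has exactly $d+1$ vertices, so we may fix facets $F_1\subseteq V_1$ and $F_2\subseteq V_2$.

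The heart of the argument is then to produce a face of $\delta_{\mathcal N}(I)$ of dimension larger than $d$. Choose $v_1\in F_1$, $v_2\in F_2$ and put $A=(F_1\setminus\{v_1\})\cup(F_2\setminus\{v_2\})$, a set of cardinality $2d$ since the two pieces are disjoint. No facet of $\Delta$ is contained in $A$: any facet lies inside one of $V_1,V_2$, hence inside $A\cap V_i$, which has only $d$ elements and therefore cannot contain a $(d+1)$-element facet. Consequently $x_A\notin I$, so $A\in\delta_{\mathcal N}(I)$ and $\dim\delta_{\mathcal N}(I)\geq 2d-1$. Since $d>1$ we have $2d-1>d$, which contradicts $\dim\delta_{\mathcal N}(I)=d$ obtained above; hence $\Delta$ must be connected.

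I expect the only subtle points to be the reduction of disconnectedness to the vertex partition $V=V_1\sqcup V_2$ in which no facet straddles the two blocks, together with the remark that purity guarantees a full-dimensional facet inside each block; once these are in place, the contradiction is a one-line counting estimate. If one prefers, the same face $A$ shows directly that, were $\Delta$ quasi $f$, the type of $\Delta$ could not be a vector of length $d+1$, which is arguably a cleaner way to phrase the final contradiction.
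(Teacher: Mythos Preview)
Your argument is correct and follows essentially the same strategy as the paper's: assume $\Delta$ disconnects as $V=V_1\sqcup V_2$ with every facet inside one block, and then exhibit a subset straddling the two blocks that contains no facet, hence is a face of $\delta_{\mathcal N}(I)$ that is too large. The paper phrases the contradiction via the ``upper perfect'' property of $G(I_{\mathcal F}(\Delta))$ (imported from \cite{qfi}), taking a $d$-subset of $V_1$ together with a $2$-subset of $V_2$ to produce a squarefree monomial of degree $d+2$ outside $I$; you instead read the equality $\dim\delta_{\mathcal N}(I)=\dim\delta_{\mathcal F}(I)$ directly off the definition of quasi $f$-ideal and build a $(2d)$-element non-face, obtaining $\dim\delta_{\mathcal N}(I)\ge 2d-1>d$. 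Your version is slightly more self-contained, since it avoids the external reference for the upper-perfect criterion, but the underlying idea is identical.
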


\begin{proof}

Suppose $\Delta$ is disconnected quasi $f$-simplicial complex on a vertex set $V$. This means that there are two non-empty disjoint subsets say $V_{1}$ and $V_{2}$ of $V$ such that $V={V_{1}}\bigcup {V_{2}}$ with property that no any facet of $\Delta$ has vertices lie in both $V_1$ and $V_2$. Therefore, We may choose a face $F_{1}\in P(V_{1})$ and another face $F_{2}\in P(V_{2})$ with $dim(F_{1})=d-1$ and $dim(F_{2})=1$ respectively. Then the square-free monomial $x_{F_{1}\bigcup
F_{2}}$ of degree $d+2$ does not belong to $I_{\mathcal{F}}$ $(\Delta),$ which is contradiction to fact that $G(I_{\mathcal{F}}$ $(\Delta))$ is upper perfect.
\end{proof}
The above theorem says that all quasi $f$-simplicial complexes of dimension greater or equal to $2$ are connected. However, for the case of dimension $1$, the situation is different. $1$-dimensional quasi $f$-simplicial complexes may or may not be connected as shown in Figure 2. Now for any graph quasi $f$-graph $G$, it is natural to ask the following questions:\\
\\ (1) When is quasi $f$-graph connected? \\ (2) When is quasi $f$-graph disconnected?\\
 \\
 \indent In next part of this section, we have addressed these questions. However, we need to set some notations and terminologies. Let $m$ and $n$ be two positive integers. A graph $G$ is said to be a $[m:n]$-graph if the complementary graph ${\overline{G}}$ of G is a complete bipartite graph on $m+n$ vertices. i.e. ${\overline{G}}=K_{m,n}$. Note that $[m:n]$-graph $G$ is a disconnected graph having two components $K_m$ and $K_n$, and we can write it as $G=K_{m}\coprod K_{n}=K_{n}\coprod K_{m}.$

\begin{figure}[h!]
  \includegraphics[width=10 cm]{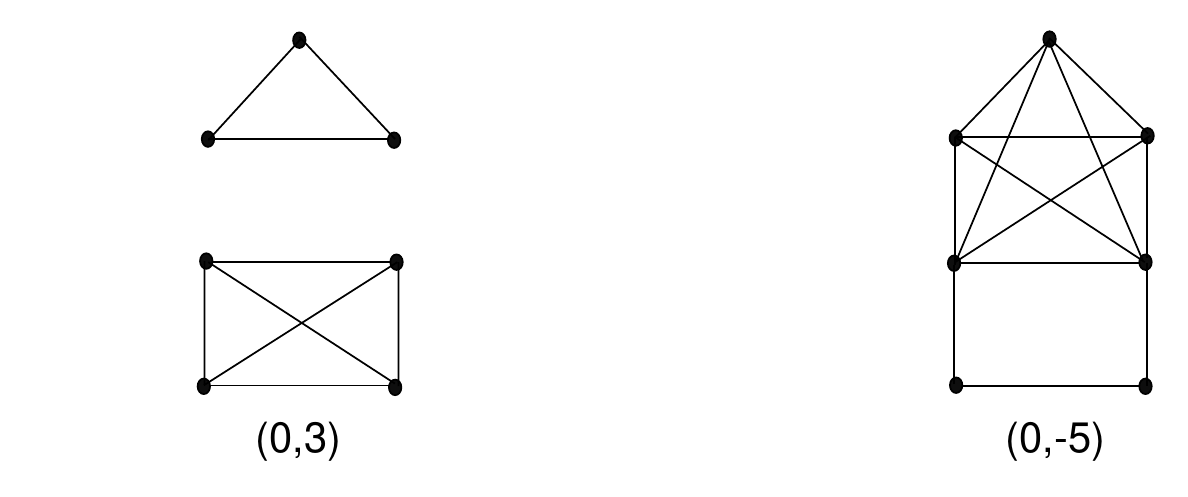}\\
  \caption{ Connected and Disconnected quasi $f$-graphs }\label{y}
\end{figure}

 \begin{Theorem}
\emph{A graph $G$ will be a disconnected quasi $f$-graph of type $(0,b)$
if and only if $G$ is $[m:n]$-graph such that $(m-n)^{2}=m+n-2b.$}
\end{Theorem}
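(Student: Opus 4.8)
The plan is to push everything back through the characterization of Theorem~3.4: a graph $G$ on $n$ vertices with no isolated vertex is a quasi $f$-graph of type $(0,b)$ — in which case $b$ is forced to equal $\binom{n}{2}-2|E(G)|$ and to satisfy $|b|<\binom{n}{2}$ — if and only if $\overline{G}$ is triangle-free. So the statement to be proved reduces to two elementary facts: (a) among graphs with no isolated vertex, a \emph{disconnected} $G$ has $\overline{G}$ triangle-free precisely when $G$ is an $[m:n]$-graph; and (b) for such a graph the value $b=\binom{m+n}{2}-2|E(G)|$ is exactly the one satisfying $(m-n)^{2}=m+n-2b$.

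For (a) I would argue purely combinatorially. If $G$ is disconnected with $\overline{G}$ triangle-free, then $G$ has exactly two connected components: three vertices taken from three distinct components are pairwise non-adjacent in $G$, hence form a triangle of $\overline{G}$. Writing $G=C_{1}\coprod C_{2}$, each $C_{i}$ must moreover be a complete graph, since two non-adjacent vertices inside $C_{1}$, together with any vertex of $C_{2}$, would again produce a triangle of $\overline{G}$. Because $G$ has no isolated vertex, $|C_{1}|,|C_{2}|\ge 2$, so $G=K_{m}\coprod K_{n}$ with $m,n\ge 2$; equivalently $\overline{G}=K_{m,n}$, i.e.\ $G$ is an $[m:n]$-graph. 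Conversely any $[m:n]$-graph is disconnected and has $\overline{G}=K_{m,n}$ bipartite, hence triangle-free. This settles (a).

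For (b), with $N=m+n$ and $|E(G)|=\binom{m}{2}+\binom{n}{2}$, the type of $G=K_{m}\coprod K_{n}$ is $(0,b)$ with $b=\binom{m+n}{2}-2\binom{m}{2}-2\binom{n}{2}$, and a short manipulation rewrites this as $2b=(m+n)-(m-n)^{2}$, i.e.\ $(m-n)^{2}=m+n-2b$. Since this equation determines $b$ uniquely, the $b$ appearing in the theorem coincides with $\binom{m+n}{2}-2|E(G)|$, so the hypothesis $|E(G)|=\frac12\bigl(\binom{m+n}{2}-b\bigr)$ of Theorem~3.4 holds; and $|b|<\binom{m+n}{2}$ is automatic for $m,n\ge 2$, since $(m-n)^{2}<(m+n)^{2}$ and $m+n<(m+n)(m+n-1)$. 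Combining (a) and (b) with Lemma~3.3 for the forward implication and Theorem~3.4 for the converse yields the equivalence: $G$ is a disconnected quasi $f$-graph of type $(0,b)$ if and only if $G$ is an $[m:n]$-graph with $(m-n)^{2}=m+n-2b$.

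I do not expect a genuine obstacle. The only points needing care are keeping the two-step classification in (a) watertight — namely the ``at most two components'' and ``each component is complete'' claims — and double-checking the arithmetic identity in (b) together with the side inequality $|b|<\binom{m+n}{2}$, so that Theorem~3.4 is legitimately applicable.
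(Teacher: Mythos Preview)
Your proposal is correct and follows essentially the same route as the paper: use the triangle-free complement criterion (Lemma~3.1/Theorem~3.4) to force $G=K_{m}\coprod K_{n}$, then match the edge count to extract $(m-n)^{2}=m+n-2b$, and run the computation in reverse for the converse. Your write-up is in fact slightly more complete than the paper's, since you explicitly argue that a disconnected quasi $f$-graph has \emph{exactly two} components (the paper simply asserts ``$G$ would have connected components (say) $G_{1}$ and $G_{2}$'' without ruling out three or more), and you verify the side condition $|b|<\binom{m+n}{2}$ needed to invoke Theorem~3.4.
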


\begin{proof}
If $G$ is a disconnected quasi $f$-graph of type
$(0,b)$, then $G$ would have connected components (say) $G_{1}$ and
$G_{2}.$ Let $m$ and $n$ be positive integers and let $|V(G_{1})|=m$ and $|V(G_{2})|=n$. Obviously, $m,n>1$ since $G$ is a simple graph with on isolated vertices. In order to prove $G$ is a $[m:n]$-graph, it is sufficient to show that $G_{1}=K_{m}$ and $G_{2}=K_{n}$. If $G_{1}$ is not a complete graph on $m$ vertices, then this means $|E(G_{1})|<{m\choose 2}$, which implies that there is at
least one edge exists in the complementary graph of $G_{1}$ (say) $e$ with vertices $v_{i}$ and $v_{j}$. In particular, $e\in E(\overline{G})$. If $v$ is any vertex in $G_{2}$, then the
edges $\{v,v_{i}\},\{v_{i},v_{j}\}$ and $\{v_{j},v\}$ forms a
cycle of length three must contained in $E(\overline{G})$, which contradict to [Lemma 3.1]. Therefore, $G_{1}=K_{m}$. Similarly, $G_{2}=K_{n}$. Next, we want prove that $(m-n)^{2}=m+n-2b$ holds. As we have proved that $G=K_{m}\coprod K_{n}$, this means that $|E(G)|={m\choose 2}+{n\choose 2}$ and as the number of edges of $G$ is ${\frac{1}{2}({m+n\choose 2}-b)}$ since $G$ is a quasi $f$-graph of type $(0,b)$ on $m+n$ vertices, we have the following equation
\begin{equation}
    \label{eq-1}\frac{1}{2}({m+n\choose 2}-b)={m\choose2}+{n\choose2}\end{equation}   It is easy to verify that $E(\overline{G})={{m+n}\choose 2}-E(G)={\frac{1}{2}({m+n\choose 2}+b)}$. As ${\overline{G}}=K_{m,n}$, this means that $E(\overline{G})=mn$ therefore, we have
\begin{equation}
    \frac{1}{2}({m+n\choose 2}+b)=mn\end{equation}

    \begin{equation}
    \label{eq-1} \Rightarrow \frac{1}{2}({m+n\choose 2}-b)=mn-b\end{equation}
Equation (2) and equation (4), together implies $$\Rightarrow {m\choose 2}+{n\choose 2}=mn-b $$ $$\Rightarrow {\frac{m(m-1)}{2}}+{\frac{n(n-1)}{2}}=mn-b$$ $$\Rightarrow m^{2}-m+n^{2}-n=2mn-2b$$ $$\Rightarrow m^{2}+n^{2}-2mn=m+n-2b$$ $$\Rightarrow (m-n)^{2}=m+n-2b$$
\\
\indent Conversely, suppose $G$ is $[m:n]$-graph on $m+n$
vertices such that $(m-n)^{2}=m+n-2b$ holds. Obviously, $G$ is disconnected since $G$ is $[m:n]$-graph. Now we need to prove that $G$ is a quasi $f$-graph. Since $G$ is $[m:n]$-graph, so ${\overline{G}}=K_{m,n}$ this means $\overline{G}$ is a triangle-free graph, because a complete bipartite graph $K_{m,n}$
contains no cycle of odd length. Next, we show that the parity of ${{m+n}\choose 2}$ is same as the parity of $b$ and $|E(G)|={\frac{1}{2}({m+n\choose 2}-b)}$. It is noted that if $G$ is $[m:n]$-graph, then $|E(G)|={m\choose 2}+{n\choose 2}$ and $|E(\overline{G})|=mn$. From relation $(m-n)^{2}=m+n-2b$, we have $$ m^{2}+n^{2}-2mn=m+n-2b$$ $$\Rightarrow m^{2}-m+n^{2}-n=2mn-2b$$ $$\Rightarrow {\frac{m(m-1)}{2}}+{\frac{n(n-1)}{2}}=mn-b$$ $$\Rightarrow {m\choose 2}+{n\choose 2}=mn-b $$ This means that $E(G)=mn-b$. As we know $E(G)+E(\overline{G})={{m+n}\choose 2}$, so we have the following equation
\begin{equation}
    \label{eq-3}2mn-b={{m+n}\choose2}\end{equation} $$\Rightarrow 2mn-2b={{m+n}\choose2}-b$$ $$\Rightarrow E(G)=mn-b={\frac {1}{2}} ({{m+n}\choose2}-b)$$ The equation (5) shows that the parity of ${{m+n}\choose 2}$ is same as the parity of $b$.

\end{proof}

\begin{Corollary}
\emph{A quasi $f$-graph $G$ on a vertex set $V$ of type $(0,b)$ is a connected
if $G$ is not $[m:n]$-graph}
\end{Corollary}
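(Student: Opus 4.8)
The statement to prove is Corollary 4.3: a quasi $f$-graph $G$ on vertex set $V$ of type $(0,b)$ is connected if $G$ is not a $[m:n]$-graph.

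\medskip

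The plan is to obtain this as the contrapositive of one direction of Theorem 4.2. First I would note that Theorem 4.2 gives a complete description of \emph{disconnected} quasi $f$-graphs: a graph $G$ is a disconnected quasi $f$-graph of type $(0,b)$ if and only if $G$ is a $[m:n]$-graph satisfying $(m-n)^2 = m+n-2b$. The key observation is that we only need the ``only if'' direction here: if a quasi $f$-graph $G$ of type $(0,b)$ is disconnected, then in particular it is a disconnected quasi $f$-graph, so by Theorem 4.2 it must be a $[m:n]$-graph (the numerical condition $(m-n)^2 = m+n-2b$ comes along for free, but is not needed for the present statement).

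\medskip

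So the proof is short: assume $G$ is a quasi $f$-graph of type $(0,b)$ on $V$ that is \emph{not} a $[m:n]$-graph. Suppose for contradiction that $G$ is disconnected. Then $G$ is a disconnected quasi $f$-graph of type $(0,b)$, and applying the forward implication of Theorem 4.2 we conclude that $G$ is a $[m:n]$-graph for some positive integers $m,n$ (necessarily $m,n>1$ since $G$ has no isolated vertex), contradicting our hypothesis. Hence $G$ is connected.

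\medskip

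There is essentially no obstacle: the entire content has already been established in Theorem 4.2, and Corollary 4.3 is just the logically weaker, more convenient ``one-way'' restatement — it drops the arithmetic constraint on $b$ and phrases the conclusion as connectedness rather than a biconditional. The only thing to be slightly careful about is that the corollary is stated as an implication (``$G$ connected if $G$ is not $[m:n]$''), not a biconditional, so I would not assert the converse; a $[m:n]$-graph need not be a quasi $f$-graph at all, and even among disconnected quasi $f$-graphs only those with $(m-n)^2=m+n-2b$ arise, so connectedness is genuinely only a \emph{sufficient}-condition statement in this direction.

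\begin{proof}
Assume $G$ is a quasi $f$-graph of type $(0,b)$ on $V$ which is not a $[m:n]$-graph, and suppose to the contrary that $G$ is disconnected. Then $G$ is a disconnected quasi $f$-graph of type $(0,b)$, so by Theorem 4.2 the graph $G$ must be an $[m:n]$-graph (for suitable positive integers $m,n$), which contradicts the hypothesis. Therefore $G$ is connected.
\end{proof}
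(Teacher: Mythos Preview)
Your proof is correct and follows the same approach as the paper: both deduce the corollary directly from (the forward direction of) Theorem 4.2 by contraposition. The paper's version is terser, but the logical content is identical.
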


\begin{proof}
If a quasi $f$-graph $G$ is not $[m:n]$-graph, then obviously it is connected.
\end{proof}

\begin{Corollary}
\emph{Let $n$ and $r$ be two positive integers and let $1<r<n$. Then for $n\geq4$, $[n:n-r]$-graph $G$ is disconnected quasi $f$-graph of type $(0,{\frac{1}{2}}(2n-r-r^{2}))$}
\end{Corollary}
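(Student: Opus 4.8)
The plan is to obtain this statement as an immediate specialization of Theorem 4.2, so the proof is essentially a substitution of parameters together with two small consistency checks. First I would fix the notation: in the definition of an $[m:n]$-graph, take the first parameter to be $n$ and the second parameter to be $n-r$. Since $1<r<n$ and $n\geq 4$, the integer $n-r$ is positive, so $G=[n:n-r]$-graph $=K_{n}\coprod K_{n-r}$ is a well-defined disconnected graph on $n+(n-r)=2n-r$ vertices; the only point to keep straight is that the symbol $n$ is used twice (once as a graph parameter, once as the vertex count of a component), which I would flag explicitly to avoid confusion.

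Next I would invoke Theorem 4.2, which says that $[m:n']$-graph is a disconnected quasi $f$-graph of type $(0,b)$ precisely when $(m-n')^{2}=m+n'-2b$. Here $m=n$ and $n'=n-r$, so $m-n'=r$ and $m+n'=2n-r$, and the condition becomes $r^{2}=2n-r-2b$. Solving for $b$ gives $b={\frac{1}{2}}(2n-r-r^{2})$, which is exactly the type claimed in the statement. Hence $G$ is automatically a disconnected quasi $f$-graph of that type, provided this value of $b$ is admissible.

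Finally I would verify admissibility. Integrality of $b$ is immediate: $r+r^{2}=r(r+1)$ is a product of two consecutive integers, hence even, so $2n-r-r^{2}$ is even and $b\in\ZZ$. One may also note that to remain within the paper's convention that a graph have no isolated vertex one should take $n-r\geq 2$ (otherwise the component $K_{n-r}=K_{1}$ contributes an isolated vertex); this causes no real difficulty and can be mentioned in passing. With these checks in place, Theorem 4.2 applies verbatim and the corollary follows. I expect no genuine obstacle here—the result is a direct corollary of Theorem 4.2—and the only steps requiring a moment's care are the bookkeeping of the repeated symbol $n$ and confirming that the resulting $b$ is an integer.
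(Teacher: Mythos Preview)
Your proposal is correct and follows essentially the same approach as the paper: both set the second parameter to $n-r$, invoke Theorem~4.2, and verify the algebraic identity $r^{2}=2n-r-2b$ (the paper starts from the target $b$ and checks the relation, while you solve the relation for $b$, but this is the same computation). Your additional checks on the integrality of $b$ and the isolated-vertex caveat for $n-r=1$ are not in the paper's proof but are reasonable remarks.
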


\begin{proof}
We need to show that $[n:n-r]$-graph $G$ is disconnected quasi $f$-graph of type $(0,{\frac{1}{2}}(2n-r-r^{2}))$. Let $b={\frac{1}{2}}(2n-r-r^{2})$ and let $m=n-r$. Using [Theorem 4.2] it is sufficient to show that the relation $(n-m)^{2}=m+n-2b$ holds. Let's see $m+n-2b=n-r+n-2{\frac{1}{2}}(2n-r-r^{2})=2n-r-2n+r+r^{2}=r^{2}=(n-m)^{2}$.
\end{proof}

\begin{figure}[h!]
  \includegraphics[width=14 cm]{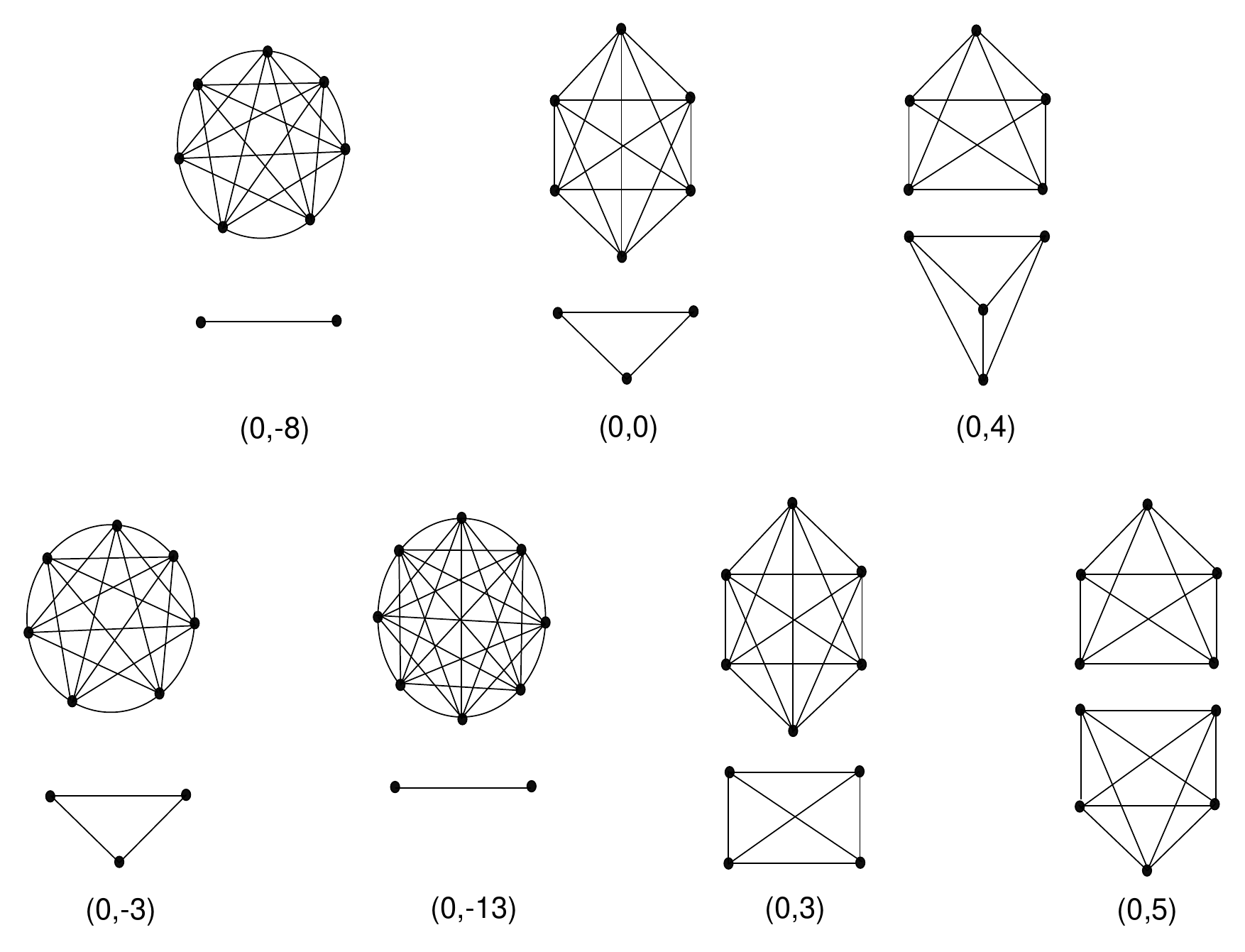}\\
  \caption{ Disconnected quasi $f$-graphs on 9 and 10 vertices }\label{y}
\end{figure}


\section{Construction of Cohen-Macaulay quasi $f$-simplicial complexes}

In this section, we will give a construction of quasi $f$-graphs
which are Cohen-Macaulay. Let us first recall the definition of Cohen-Macaulay Graphs.

\begin{Definition}
\emph{The ring R is called Cohen-Macaulay if its depth is equal to its dimension.}
\end{Definition}

\begin{Definition}
A graph $G$ on the vertex set $V=\{ x_1,x_2,\ldots ,x_n\} $ is said
to be Cohen-Macaulay over the field $k$ if $k[x_1,x_2,\ldots
,x_n]/I(G)$ is a Cohen-Macaulay ring.
\end{Definition}

\begin{Theorem}
\emph{Let $b$ is an integer such that $|b| < \lfloor \frac{n}{2} \rfloor$
and $G$ be a graph on $n$ vertices which is constructed by following
cases:
    \begin{enumerate}
        \item If $n=4k, b=2b'$, $G$ consists of two components $G_1$ and $G_2$ joined with $k-b'$ egdes, where both $G_1$ and $G_2$ are complete graphs on $2k$ vertices.
        \item If $n=4k+1, b=2b'$, $G$ consists of two components $G_1$ and $G_2$ joined with $k-b'$ egdes, where $G_1$ and $G_2$ are complete graphs on $2k+1$ vertices and $2k$ vertices, respectively.
        \item If $n=4k+2, b=2b'+1$, $G$ consists of two components $G_1$ and $G_2$ joined with $k-b'$ egdes, where both $G_1$ and $G_2$ are complete graphs on $2k+1$ vertices.
        \item If $n=4k+3, b=2b'+1$, $G$ consists of two components $G_1$ and $G_2$ joined with $k-b'$ egdes, where $G_1$ and $G_2$ are complete graphs on $2k+2$ vertices and $2k+1$ vertices, respectively.
    \end{enumerate}
Then $G$ is a Cohen-Macaulay quasi $f$-graph of type $(0,b)$.}
\end{Theorem}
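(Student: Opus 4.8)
The plan is to verify the two conditions that make each $G$ a quasi $f$-graph of type $(0,b)$, namely (i) $\overline{G}$ is triangle-free and $|E(G)| = \frac{1}{2}\binom{n}{2} - b)$ with the right parity, so that Theorem 3.4 applies, and (ii) that the resulting graph is Cohen-Macaulay. For (i), each of the four cases describes $G$ as two disjoint complete graphs $G_1 = K_p$ and $G_2 = K_q$ (with $\{p,q\}$ equal to $\{2k,2k\}$, $\{2k+1,2k\}$, $\{2k+1,2k+1\}$, or $\{2k+2,2k+1\}$, and $p+q=n$) together with $k-b'$ additional edges joining a vertex of $G_1$ to a vertex of $G_2$. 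I would first check the edge count: $|E(G)| = \binom{p}{2} + \binom{q}{2} + (k-b')$, and a short computation shows this equals $\frac{1}{2}(\binom{n}{2}-b)$ in each case (using the relevant substitution $n=4k+r$, $b=2b'+\epsilon$). Since $|b| < \lfloor n/2\rfloor \le \binom{n}{2}$ for $n\ge 3$, the hypothesis $|b| < \binom{n}{2}$ of Theorem 3.4 holds, and the parity of $\binom{n}{2}$ automatically matches that of $b$ because $|E(G)|$ came out an integer.

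Next I would check that $\overline{G}$ is triangle-free. The complement $\overline{G}$ consists of a complete bipartite graph $K_{p,q}$ between the vertex set of $G_1$ and that of $G_2$, minus the $k-b'$ cross edges that were added to $G$; there are no edges of $\overline{G}$ inside $V(G_1)$ or inside $V(G_2)$ since $G_1,G_2$ are complete. A subgraph of a bipartite graph is bipartite, hence contains no odd cycle, so $\overline{G}$ has no triangle. (I must also confirm $G$ has no isolated vertex: each $G_i$ is a complete graph on at least $2k \ge 2$ vertices, so every vertex has degree $\ge 1$; here one needs $k\ge 1$, equivalently $n\ge 4$, which is the relevant range.) Theorem 3.4 then gives that $G$ is a quasi $f$-graph of type $(0,b)$.

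Finally, for Cohen-Macaulayness I would exploit that adding the cross edges makes $G$ connected — indeed $G$ is a disjoint union of two cliques with a ``bridge'' structure — and relate $G$ to a well-understood Cohen-Macaulay family. The cleanest route: a graph whose complement is bipartite is a \emph{cochordal} graph union-of-cliques situation; more directly, I expect the argument to show $G$ (or a suitable relabeling) is a \emph{chordal} graph, and then invoke a known criterion — e.g. that a connected chordal graph is Cohen-Macaulay iff it is unmixed, or Faridi's result on simplicial trees, or Herzog--Hibi's characterization of Cohen-Macaulay chordal graphs. One checks that $G$ is chordal because it is obtained by gluing two cliques along a few edges (any cycle of length $\ge 4$ must live inside one clique or use at least two cross edges, and in the latter case it has a chord among the clique vertices), and that $G$ is unmixed by computing the minimal vertex covers. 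The main obstacle I anticipate is precisely this last step: verifying unmixedness and identifying the exact Cohen-Macaulay criterion to cite, since the join of two cliques by several edges can a priori fail to be unmixed unless the number $k-b'$ of cross edges is constrained — and here the bound $|b| < \lfloor n/2\rfloor$ is exactly what forces $k - b' \ge 1$ (so $G$ is connected) while keeping the two cliques ``large'', which is what should make the height and depth of $R/I(G)$ agree. I would carry out the vertex-cover computation explicitly in each of the four cases to pin down $\depth R/I(G) = \dim R/I(G)$.
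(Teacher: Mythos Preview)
Your verification that $G$ is a quasi $f$-graph of type $(0,b)$ is correct and matches the paper: the edge count and the observation that $\overline{G}$ is a subgraph of $K_{p,q}$ (hence bipartite, hence triangle-free) are exactly what the authors do, and Theorem~3.4 then applies.

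The gap is in the Cohen--Macaulay step: your chordality claim is false. Take two cross edges $a_1b_1$ and $a_2b_2$ with $a_1\neq a_2$ in $V(G_1)$ and $b_1\neq b_2$ in $V(G_2)$ (this occurs whenever $k-b'\geq 2$ and the edges are not chosen to share an endpoint, which the statement does not forbid). Then $a_1,b_1,b_2,a_2,a_1$ is a $4$-cycle whose only possible chords are $a_1b_2$ and $a_2b_1$, neither of which need be an edge of $G$. So $G$ is in general not chordal, and the Herzog--Hibi or simplicial-tree criteria do not apply. Moreover, even your unmixedness computation (which does go through: $k-b'$ is strictly smaller than $\min(p,q)$, so no vertex dominates the opposite clique) would not suffice on its own, since for non-chordal graphs unmixed does not imply Cohen--Macaulay. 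The paper instead observes that $\overline{G}$ triangle-free forces $\dim R/I(G)=2$, i.e.\ $\operatorname{ht} I(G)=n-2$, and then invokes the (Reisner/Villarreal) fact that a one-dimensional Stanley--Reisner complex is Cohen--Macaulay if and only if it is connected. The real work is therefore to show that $\overline{G}$, a subgraph of $K_{p,q}$ missing only $k-b'$ edges, is connected; the authors do this by a direct edge-count: if $\overline{G}$ split as $(V_1\cup V_2)\,\sqcup\,((V(G_1)\setminus V_1)\cup(V(G_2)\setminus V_2))$ with $|V_1|=x,\ |V_2|=y$, then $|E(\overline{G})|\le xy+(p-x)(q-y)$, and one checks case by case that this is strictly less than $\tfrac12\bigl(\binom{n}{2}+b\bigr)$ under the hypothesis $|b|<\lfloor n/2\rfloor$. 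That counting argument, not a structural chordal/unmixed argument, is the missing idea.
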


\begin{proof}

The condition $|b| < \lfloor \frac{n}{2} \rfloor$ ensures that in each case, $b'< k$, so there are always a positive number of edges joining $G_1$ and $G_2$.\\

First, we check that the number of edges of $G$ as constructed is $\frac{1}{2}({n\choose 2}-b)$. In fact, for the case $(1)$, the number of edges of $G$ is $$2{{2k} \choose 2}+k-b'=4k^2-k-b'=\frac{1}{2}({4k\choose 2}-b).$$ Similarly, we can check number of edges to be $\frac{1}{2}({n\choose 2}-b)$ for the cases $(2)$, $(3)$ and $(4)$. Thus, it is easy to see from the above construction that $G$ is a quasi $f$-graph of type $(0,b)$ - since the complement of $G$ is a bipartite graph, which does not contain any triangle and it has $\frac{1}{2}({n\choose 2}-b)$ edges.\\

 Let us recall from \cite{v} that if $G$ is a graph on $n$ vertices such that $ht(I(G))=n-2$ then $G$ is Cohen-Macaulay if and only if $\delta_{\mathcal{N}}(I(G))$ is connected.  Thus, it suffices to show that the complement of $G$, which is $\delta_{\mathcal{N}}(I(G))$ (since it has no triangle), is connected. In fact, the main idea is the following: since the number of edges joining $G_1$ and $G_2$ is small compared to the maximal number of possible edges between $G_1$ and $G_2$, so when we take the complement, the number of edges matching the vertex sets of $G_1$ and $G_2$ is large enough to make it connected. We will give the calculation case by case and we will see further that we can take $|b|\leq \lfloor \frac{n}{2} \rfloor$ in the assumption of the theorem with some special exceptions (see remark below).\\

Let $G$ be the graph constructed above. The number of edges of the $\overline{G}$ is $\frac{1}{2}({n\choose 2}+b)$. Suppose that $\overline{G}$ is not connected, i.e, there exists the sets $V_1$ with $x$ vertices from $G_1$ and $V_2$ with $y$ vertices from $G_2$ such that all edges of $\overline{G}$ are edges joining vertices from $V_1$ to $V_2$ and vertices from $V(G_1)-V_1$ to $V(G_2)-V_2$.\\

\textbf{Case 1:} The number of edges of $\overline{G}$ is at most
$xy+(2k-x)(2k-y)$. Without loss of generality, assume that $x\leq k$,
\begin{enumerate}
    \item If $x=0$, then $y\geq 1$. This means that the number of edges of $\overline{G}$ is at most $2k(2k-y)$. Since $-2ky\leq -2k<-k+\frac{b}{2}$, we have $$2k(2k-y)=4k^2-2ky<4k^2-k+\frac{b}{2}=\frac{1}{2}({n\choose 2}+b)$$ which is a contradiction. Note that if $|b|= \lfloor \frac{n}{2} \rfloor$ then the inequality above becomes equality if and only if $x=0,y=1$ and $k=-\frac{b}{2}$.
    \item If $x\geq 1$, then since $2xy \leq 2ky$ and $-2kx <-k +\frac{b}{2}$ it holds that $$xy+(2k-x)(2k-y)=4k^2+2xy-2ky-2kx<4k^2-k+\frac{b}{2}=\frac{1}{2}({n\choose 2}+b)$$ which is a contradiction. Note that if $|b|= \lfloor \frac{n}{2} \rfloor$, then the inequality above becomes equality if and only if $x=y=k=1$ and $b=-2$ or $x=1,y=0$ and $k=-\frac{b}{2}$.
\end{enumerate}

 \textbf{Case 2:} The number of edges of $\overline{G}$ is at most $xy+(2k+1-x)(2k-y)$. Without loss of generality, assume that $x\leq k$,
\begin{enumerate}
    \item If $y=0$, then $x\geq 1$. This means that the number of edges of $\overline{G}$ is at most $2k(2k+1-x)$. Since $-2kx <-k+\frac{b}{2}$, we have $$2k(2k+1-x)=4k^2+2k-2kx<4k^2+k+\frac{b}{2}= \frac{1}{2}({n\choose 2}+b)$$
    which is a contradiction. Note that if $|b|= \lfloor \frac{n}{2} \rfloor$ then the inequality above becomes equality if and only if $y=0,x=1$ and $k=-\frac{b}{2}$.
    \item If $y\geq 1$, then since $2xy\leq 2ky$, $-2kx-y<-k+\frac{b}{2}$ (even when $|b|= \lfloor \frac{n}{2} \rfloor$) we have $$xy+(2k+1-x)(2k-y)<4k^2+k+\frac{b}{2}=\frac{1}{2}({n\choose 2}+b)$$
    which is a contradiction.
\end{enumerate}

\textbf{Case 3:} The number of edges of $\overline{G}$ is at most
$xy+(2k+1-x)(2k+1-y)$. Without loss of generality, assume that $x\leq k$,
\begin{enumerate}
    \item If $x=0$ then $y\geq 1$. Since $-2ky\leq -2k< -k +\frac{1}{2}+\frac{b}{2}$, we have
    $$(2k+1)(2k+1-y)<4k^2+3k+\frac{1}{2}+\frac{b}{2}= \frac{1}{2}({n\choose 2}+b)$$ which is a contradiction. Also, if $|b|= \lfloor \frac{n}{2} \rfloor$ then the inequality above becomes equality if and only if $x=0,y=1$ and $b=-(2k+1)$.
    \item If $x\geq 1$, then since $2xy-2ky\leq 0$, $1-x-y \leq 0$ and $-2kx<-k+\frac{1}{2}+\frac{b}{2}$ we have $$xy+(2k+1-x)(2k+1-y)<4k^2+3k+\frac{1}{2}+\frac{b}{2}= \frac{1}{2}({n\choose 2}+b)$$ (contradiction). Also, if $|b|= \lfloor \frac{n}{2} \rfloor$, then the inequality above becomes equality if and only if $x=1,y=0$ and $b=-(2k+1)$.
\end{enumerate}

\textbf{Case 4:} The number of edges of $\overline{G}$ is at most
$xy+(2k+2-x)(2k+1-y)$. Without loss of generality, assume that $x\leq k+1$,
\begin{enumerate}
    \item If $y=0$, then $x\geq 1$. Since $-2kx<-k+\frac{1}{2}+\frac{b}{2}$, we have
    $$(2k+2-x)(2k+1)<4k^2+5k+\frac{3}{2}+\frac{b}{2}=\frac{1}{2}({n\choose 2}+b)$$ (contradiction). Note that if $|b|= \lfloor \frac{n}{2} \rfloor$ then the inequality above becomes equality if and only if $y=0,x=1$ and $b=-(2k+1)$.
    \item If $y\geq 1$, then since $2xy-(2k+2)y\leq 0$ and $-2kx<-k+\frac{1}{2}+\frac{b}{2}$ we have
    $$(2k+2-x)(2k+1-y)<4k^2+5k+\frac{3}{2}+\frac{b}{2}=\frac{1}{2}({n\choose 2}+b)$$ which is a contradiction.
\end{enumerate}
\end{proof}

\begin{Remark}
\emph{As in the proof above, if the assumption of the theorem was $|b|\leq
\lfloor \frac{n}{2} \rfloor$ then the construction still gives us
Cohen-Macaulay graphs except the following cases when $|b|= \lfloor
\frac{n}{2} \rfloor$:
\begin{enumerate}
    \item The graph $C_4$ is of type $(0,-2)$. (When $x=y=k=1$ and $b=-2$)
    \item The graph $K_{2k} \coprod K_{2k}$ or $K_{2k+1} \coprod K_{2k}$ with $2k$ edges joining $1$ vertex from the first component to all vertices of the second component. These are of type $(0,-2k)$.
    \item The graph $K_{2k+1} \coprod K_{2k+1}$ or $K_{2k+2} \coprod K_{2k+1}$ with $2k+1$ edges joining $1$ vertex from the first component to all vertices of the second component. These are of type $(0,-2k-1)$.
\end{enumerate}}
\end{Remark}

\begin{Example}
\emph{At extreme case when $|b|= \lfloor \frac{n}{2} \rfloor$, the graph
$K_2\coprod K_2$ is Cohen-Macaulay quasi $f$-graph of type $(0,2)$
whereas the graph $C_4$ is non Cohen-Macaulay of type $(0,-2)$.}
\end{Example}

\begin{Example}\emph{
(a)Take $n=7$ with $k=1.$ Let us take $b=1$ this means
that $b'=0.$ The quasi $f$-graph will be obtained by joining the
graphs $G_{1}=K_3$ with $G_{2}=K_4$ by 1 edge as shown in figure.\\
(b) Take $n=7$ with $k=1.$ Let us take $b=-1$ this means that
$b'=-1.$ The quasi $f$-graph will be obtained by joining the graphs
$G_{1}=K_3$ with $G_{2}=K_4$ by 2 edge as shown in figure.\\
(c) Take $n=8$ with $k=2.$ Let us take $b=2$ this means
that $b'=1.$ The quasi $f$-graph will be obtained by joining the
graphs $G_{1}=K_4$ with $G_{2}=K_4$ by 1 edge as shown in figure.\\
(d) Take $n=8$ with $k=2.$ Let us take $b=-2$ this means that
$b'=-1.$ The quasi $f$-graph will be obtained by joining the graphs
$G_{1}=K_4$ with $G_{2}=K_4$ by 3 edge as shown in figure.\\
}
\end{Example}
\begin{figure}[h!]
  \includegraphics[width=16 cm]{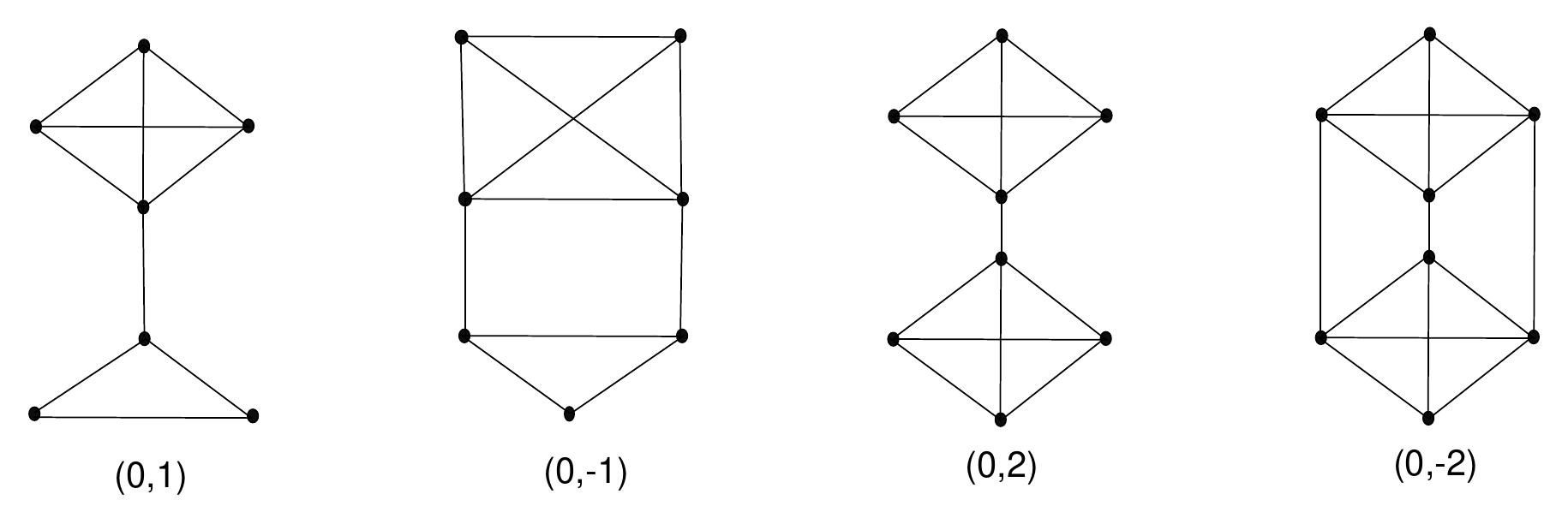}\\
  \caption{ Cohen-Macaulay Quasi $f$-graphs }\label{y}
\end{figure}

Unlike the fact that all $f$-graphs are Cohen-Macaulay, we have lot
of examples of non-Cohen-Macaulay $f$-graphs. Some simple examples
are described in remark above. In particular, among $5$ graphs in
$4$ vertices of quasi $f$-graphs as in figure $1$, two of them are
Cohen-Macaulay and three of them are not. Also, there are a lot more
Cohen-Macaulay quasi $f$-graphs even in small case that is not
constructed by argument above, for example $K_4 \coprod K_2$ (of
type $(0,1)$). It would be interesting to characterize all
Cohen-Macaulay quasi $f$-graphs in particular types.

\end{document}